\theoremstyle{plain}
\newtheorem{theorem}{Theorem}[section]
\newtheorem{proposition}[theorem]{Proposition}
\newtheorem{lemma}[theorem]{Lemma}
\theoremstyle{definition}
\newtheorem{definition and remark}[theorem]{Definition and Remark}
\newtheorem{notation, definition and remark}[theorem]{Notation, Definition and Remark}
\newtheorem{convention and remark}[theorem]{Convention and Remark}
\newtheorem{notation and remark}[theorem]{Notation and Remark}
\newtheorem{remark and definition}[theorem]{Remark and Definition}
\newtheorem{reminder and remark}[theorem]{Reminder and Remark}
\newtheorem{remark and problems}[theorem]{Remark and Problems}
\newtheorem{example and remark}[theorem]{Example and Remark}
\newcommand{\To}{\rightarrow}
\newcommand{\Nn}{\mathbb{N}_0}
\newcommand{\F}[1]{\frak #1}
\newcommand{\Z}{\mathbb{Z}}
\newcommand{\N}{\mathbb{N}}     % natural numbers
\begin{document}

\title{Some finiteness properties of Generalized graded local cohomology modules}
\author{ISMAEL AKRAY, ADIL KADIR JABBAR AND REZA SAZEEDEH}
\email{ ismaelakray@soran.edu.iq, adilqj@gmail.com,  rsazeedeh@ipm.ir }
%\date{Urmia, May 2004}

\subjclass[2000]{Primary 13D45, 13E10}

\keywords{Graded local cohomology, Generalized graded local
cohomology, Artinian module}

%%%%%%%%%%%%%%%%%%%%%%%%%%%%%%%%%%%%%%%%%%%%%%%%%%%%%%%%%%%%%%%%%%%

\begin{abstract}
Let $R = \bigoplus_{n \in \Nn} R_n$ be a Noetherian homogeneous ring
with local base ring $(R_0, \mathfrak{m}_0)$ and let $M$ and $N$ be
finitely generated graded $R$-modules. Let $i,j\in\mathbb{N}_0$. In
this paper we will study Artinianess of $\Gamma_{\frak
m_0R}(H_{R_+}^i(M,N)), H_{\frak m_0R}^1(H_{R_+}^i(M,N)),\\
H_{R_+}^i(M,N)/{\frak m_0}H_{R_+}^i(M,N), H_{R_+}^j(M,H_{\frak
m_0R}^i(N)), H_{\frak m_0R}^j(M,H_{R_+}^i(N))$, where $R_+$
denotes the irrelevant ideal of $R$.
\end{abstract}

\maketitle
\section{Introduction}\label{intro}

\hspace{0.4cm} Throughout this paper, let $R = \bigoplus_{n \in \Nn}
R_n$ be a Noetherian homogeneous ring with local base ring $(R_0,
\mathfrak{m}_0)$. So $R_0$ is a Noetherian ring and there are
finitely many elements $l_1, \dots, l_r \in R_1$ such that $R =
R_0[l_1, \dots, l_r]$. Let $R_+ := \bigoplus_{n \in \N} R_n$ denote
the irrelevant ideal of $R$ and let $\mathfrak{m} := \mathfrak{m}_0
\oplus R_+$ denote the graded maximal ideal of $R$. Finally let $M =
\bigoplus_{n \in \Z} M_n$ and $N = \bigoplus_{n \in \Z} N_n$ be
finitely generated graded $R$-modules.

\hspace{0.4cm} Herzog introduced a generalization of local
cohomology so called generalized local cohomology, denoted by
$H_{R_+}^i(M,N)$ which is isomorphic to
$\underset{\underset{n}{\rightarrow}}{\mbox{lim}}
{\rm{Ext}}_R^i(M/{{R_+}^n}M,N)$. We note that if $M=R$, then
$H_{R_+}^i(R,N)=H_{R_+}^i(N)$ is the usual local cohomology. As is
well known, the finiteness of the local cohomology modules have an
important role in commutative algebra and algebraic geometry. Many
of mathematicians work on finiteness of local cohomology. One of
approaches in finiteness is Artinianess. Authors, in [BFT],
[BRS], [RS] and [S] studied Artinianess of the graded modules
$\Gamma_{\frak m_0R}(H_{R_+}^i(N)$, $H_{R_+}^i(N)/{\frak
m_0}H_{R_+}^i(N)$ and $H_{\frak m_0R}^1(H_{R_+}^i(M))$. In the
recent paper we will study the Artinianess of Generalized graded
local cohomology. Here, we briefly mention some our results which
have been proved in this paper. Let $f$ be the least non-negative
integer such that the graded module $H_{R_+}^f(M,N)$ is not
finitely generated. We prove that $\Gamma_{\frak
m_0R}(H_{R_+}^i(M,N))$ is Artinian for each $i\leq f$. We also
prove that if $M$ is of finite projective dimension with ${\rm
pd}_RM=n$ and $c=c_{R_+}(N)$ is the largest non-negative integer
$i$ such that $H_{R_+}^i(N)\neq 0$, then
$H_{R_+}^{n+c}(M,N)/{\frak m_0}H_{R_+}^{n+c}(M,N)$ is Artinian.
In particular if $c_{R_+}(M,N)$ is the largest non-negative
integer $i$ such that $H_{R_+}^i(M,N)\neq 0$, then
$c_{R_+}(M,N)\leq {\rm pd}_RM+c_{R_+}(N)$. We prove a similar
result for invariant $a_{R_+}(N)$ which shows the largest
non-negative integer $i$ such that $H_{R_+}^i(N)$ is not
Artinain. Moreover, we show that if $H_{R_+}^i(M,N)$ is
$R_+$-cofinite, then $\Gamma_{\frak m_0R}(H_{R_+}^i(M,N))$ is
Artinian. Finally we study Artinianess of generalized local
cohomology when ${\rm dim}(R_0)\leq 1$. In this case, we prove
that $H_{R_+}^i(M,N)/{\frak m_0}H_{R_+}^i(M,N), \Gamma_{\frak
m_0R}(H_{R_+}^i(M,N)), H_{\frak m_0R}^1(H_{R_+}^i(M,N)),
H_{R_+}^j(M,H_{\frak m_0R}^1(N))$ and $H_{\frak
m_0R}^j(M,H_{R_+}^i(N))$ are Artinian for all $i, j\in
\mathbb{N}_0$.

\medskip
%###########################################################################
%$##########################################################################
\section{The results}\label{result}
\medskip

\hspace{0.4cm}It should be noted that local flat morphism of local
Noetherian rings is faithfully flat. So, if $R'_0$ is flat over
$R_0$ and $\F{m_0}'=\F{m_0}R'_0$, then $R'_0$ is faithfully flat
over $R_0$. Moreover, it follows from [K, Theorem 1] that if
$(R'_0,\F{m_0}')$ is a faithfully flat local $R_0$-algebra, then $A$
is a graded Artinian $R$-module if and only if
$A':=R'_0\otimes_{R_0}A$ is a graded Artinian module over
$R':=R'_0\otimes_{R_0}R$. In view of this argument, we have the
following proposition.

\medskip

\begin{proposition}\label{2.1}
Let $f:=f_{R_+}(M,N)$ be the least non-negative integer such that
$H_{R_+}^f(M,N)$ is not finitely generated.  Then $\Gamma_{\frak
m_0R}(H_{R_+}^i(M,N))$ is Artinian for each $i\leq f$.
\end{proposition}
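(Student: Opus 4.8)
The plan is to split the range $0 \le i \le f$ into the two cases $i < f$ and $i = f$; the first is routine, while the second carries all of the difficulty. Throughout I use that generalized local cohomology is $R_+$-torsion: from the description $H_{R_+}^i(M,N) \cong \varinjlim_n \mathrm{Ext}^i_R(M/R_+^nM,N)$ each term is annihilated by a power of $R_+$, hence so is every element of the limit.

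For $i < f$, the module $H_{R_+}^i(M,N)$ is finitely generated by the choice of $f$. Being also $R_+$-torsion and finitely generated over the Noetherian ring $R$, it is annihilated by a fixed power $R_+^k$, so it is a finitely generated module over $R/R_+^k$. Since $R = R_0[l_1,\dots,l_r]$, the ring $R/R_+^k = \bigoplus_{n=0}^{k-1} R_n$ is a finitely generated $R_0$-module, whence $H_{R_+}^i(M,N)$ is finitely generated over $R_0$ and concentrated in finitely many degrees. Its $\mathfrak m_0 R$-torsion submodule $\Gamma_{\mathfrak m_0 R}(H_{R_+}^i(M,N))$ is then a finitely generated, $\mathfrak m_0$-torsion $R_0$-module, hence of finite length over $R_0$; being supported in finitely many degrees it is Artinian.

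For $i = f$ write $H = H_{R_+}^f(M,N)$. Since $H$ is $R_+$-torsion and $\mathfrak m = \mathfrak m_0 R + R_+$, a direct computation of annihilators shows $\Gamma_{\mathfrak m_0 R}(H) = \Gamma_{\mathfrak m}(H)$, so this module is $\mathfrak m$-torsion. By Melkersson's criterion it is Artinian once its $\mathfrak m$-socle $(0 :_H \mathfrak m)$ is Artinian, i.e. finite dimensional over $k := R_0/\mathfrak m_0$. As $\mathfrak m = \mathfrak m_0 R + R_+$ one has $(0 :_H \mathfrak m) = (0 :_{(0:_H R_+)} \mathfrak m_0)$, so it is enough to prove that $\mathrm{Hom}_R(R/R_+, H) = (0 :_H R_+)$ is a finitely generated $R_0$-module: its $\mathfrak m_0$-torsion part is then a finitely generated $k$-module, hence finite dimensional, and we are done.

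The crux is therefore the finiteness statement: $\mathrm{Hom}_R(R/R_+, H_{R_+}^f(M,N))$ is finitely generated. This is the Ext-type analogue of the classical Brodmann finiteness theorem, which asserts that the socle of the first non-finitely-generated (ordinary) local cohomology module is finitely generated. I would establish it by induction on $f$, first replacing $R_0$ if necessary by a faithfully flat extension with infinite residue field; by the remark preceding the proposition this affects neither the hypotheses nor graded Artinianness, and it produces a homogeneous element $x \in R_1$ that is filter-regular with respect to $N$. Multiplication by $x$, together with the long exact sequences linking $H_{R_+}^i(M,N)$ to the cohomology after reduction by $x$, allows one to relate $f_{R_+}(M,N)$ to the corresponding finiteness dimension of the reduced data and to invoke the inductive hypothesis on the finitely generated lower cohomology modules. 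The main obstacle is precisely this step: controlling the socle of $H_{R_+}^f(M,N)$ from the finite generation of the $H_{R_+}^i(M,N)$ with $i<f$, and verifying that the filter-regular reduction interacts compatibly with the two-variable cohomology so that the induction runs. Once the Hom-module is known to be finitely generated, the reductions above combined with Melkersson's criterion finish the proof.
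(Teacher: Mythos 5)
Your case $i<f$ is fine, and the reduction in the case $i=f$ (namely $\Gamma_{\mathfrak m_0R}(H)=\Gamma_{\mathfrak m}(H)$ for the $R_+$-torsion module $H=H_{R_+}^f(M,N)$, followed by Melkersson's criterion for $\mathfrak m$ and the identity $(0:_H\mathfrak m)=(0:_{(0:_HR_+)}\mathfrak m_0)$) is logically sound. But the entire difficulty of the proposition has been pushed into the claim that $\mathrm{Hom}_R(R/R_+,H_{R_+}^f(M,N))$ is a finitely generated module, and that claim is never proved: you state that you ``would establish it by induction on $f$'' and then describe the inductive step only in outline, yourself calling it ``the main obstacle.'' This is a genuine gap, not a routine verification. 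The assertion you need is the generalized-local-cohomology analogue of the Brodmann--Lashgari finiteness theorem (finite generation of $\mathrm{Hom}_R(R/\mathfrak a,H^f_{\mathfrak a}(N))$ at the first non-finitely-generated level); it is a substantive theorem requiring a careful argument about how the filter-regular reduction interacts with the two-variable functor, and nothing in your sketch shows how the socle of $H^f$ is actually controlled by the finitely generated $H^i$, $i<f$.

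It is worth noting that the paper's own proof deliberately avoids this stronger finiteness statement. It inducts on $i$ directly: after reducing to $\Gamma_{R_+}(N)=0$ and passing to an infinite residue field, a non-zerodivisor $x\in R_1$ on $N$ yields the exact sequence $0\To U^i\To H_{R_+}^i(M,N/xN)\To (0:_{H_{R_+}^{i+1}(M,N)}x)(-1)\To 0$ with $U^i=H_{R_+}^i(M,N)/xH_{R_+}^i(M,N)$. Since $i<f$, the module $U^i$ is finitely generated and $R_+$-torsion, so $H^1_{\mathfrak m_0R}(U^i)$ is Artinian, and the induction hypothesis (via $f(M,N/xN)\geq f(M,N)-1$) makes $\Gamma_{\mathfrak m_0R}(H_{R_+}^i(M,N/xN))$ Artinian; squeezing $(0:_{\Gamma_{\mathfrak m_0}(H^{i+1}_{R_+}(M,N))}x)$ between these two Artinian modules and applying Melkersson for the single element $x$ finishes the step. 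The point is that only \emph{Artinianness} of $(0:x)$ is needed there, not finite generation of the full $R_+$-socle. If you want to salvage your route, you must supply a complete proof of the finite generation of $\mathrm{Hom}_R(R/R_+,H_{R_+}^f(M,N))$; as written, the proof is incomplete at its central step.
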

\begin{proof}
We prove the assertion by induction on $i$. If $i=0$, then
$\Gamma_{R_+}(M,N)$ is finitely generated; and hence
$\Gamma_{\frak m_0R}(\Gamma_{R_+}(M,N))$ is finitely generated.
Thus there exists some positive integer $n$ such that ${\frak
m}^n\Gamma_{\frak m_0R}(\Gamma_{R_+}(M,N))=0$ and so the result
follows in this case. Now, suppose inductively that the result
has been proved for all values smaller than or equal to $i$ with
$i<f$ and we prove it for $i+1$, where $i+1\leq f$. Consider the
exact sequence $0\To \Gamma_{R_+}(N)\To N\To N/\Gamma_{R_+}(N)\To
0.$ Application of the functor $H_{R_+}^i(M,-)$ to the above exact
sequence induces the following exact sequence\\
$ H_{R_+}^{i+1} (M,\Gamma_{R_+}(N))\To
H_{R_+}^{i+1}(M,N)\stackrel{\phi}\To
H_{R_+}^{i+1}(M,N/\Gamma_{R_+}(N))\To
H_{R_+}^{i+2}(M,\Gamma_{R_+}(N))\hspace{0.5cm} \ddag.$ Set
$K_1={\rm Ker}\phi$ and $K_2={\rm Coker}\phi$.  We note that for
each $i$, there is an isomorphism
$H_{R_+}^i(M,\Gamma_{R_+}(N))\cong {\rm
Ext}^i(M,\Gamma_{R_+}(N))$ and then this module is an
$R_+$-torsion finitely generated graded $R$-module. Therefore
$K_1$ and $K_2$ are $R_+$-torsion finitely generated graded
$R$-modules. Consider $\Gamma_{\frak m_0R}(K_1)$ and
$\Gamma_{\frak m_0R}(K_2)$. By the previous argument these
modules are finitely generated and $R_+$-torsion and then there
exists some positive integer number $n$ such that ${\frak
m}^n\Gamma_{\frak m_0R}(K_1)={\frak m}^n\Gamma_{\frak
m_0R}(K_2)=0$ and this implies that $\Gamma_{\frak m_0}(K_1)$ and
$\Gamma_{\frak m_0R}(K_2)$ are Artinian. Now, in view of the
sequence $\ddag$, we can conclude that $\Gamma_{\frak
m_0}(H_{R_+}^{i+1}(M,N))$ is Artinian if and only if
$\Gamma_{\frak m_0}(H_{R_+}^{i+1}(M,N/\Gamma_{R_+}(N))$ is
Artinian. So we may assume that $\Gamma_{R_+}(N)=0$. Let
$\mathbf{x}$ be an indeterminate and let
$R'_0:=R_0[\mathbf{x}]_{\F{m_0}R_0[\mathbf{x}]}$,
$\F{m_0}':=\F{m_0}R'_0, R'=R'_0\otimes_{R_0}R$,
$M':=R'_0\otimes_{R_0}M$, and $N':=R'_0\otimes_{R_0}N$. Then by
the flat base change property of local cohomology, for each $i$
we have $R'_0\otimes_{R_0}\Gamma_{\frak
m_0R}(H_{R_+}^{i+1}(M,N))\cong \Gamma_{\frak
m_0R'_0}(H_{(R'_0\otimes_{R_0}R)_+}^{i+1}(R'_0\otimes_{R_0}M,R'_0\otimes_{R_0}N))$
and using the above argument $\Gamma_{\frak
m_0}(H_{R_+}^{i+1}(M,N))$ is Artinian if and only if
$\Gamma_{\frak
m_0R'_0}(H_{(R'_0\otimes_{R_0}R)_+}^{i+1}(R'_0\otimes_{R_0}M,R'_0\otimes_{R_0}N))$
is Artinian. So we may assume that the residue field $k$ of $R_0$
is infinite. As $\Gamma_{R_+}(N)=0$, there exists an element
$x\in R_1$ which is a non-zerodivisor with respect to $N$ and so
there is an exact sequence $0\To N(-1)\stackrel{x.}\To N\To
N/xN\To 0$ of graded $R$-modules. Application of the functor
$H_{R_+}^i(M,-)$ to this exact sequence induces the exact sequence
$$\dots \stackrel{x.}\To H_{R_+}^i(M,N)\To H_{R_+}^{i}(M,N/xN)\To
H_{R_+}^{i+1}(M,N)(-1)\stackrel{x.}\To H_{R_+}^i(M,N).$$ If we
consider $U^i:=H_{R_+}^i(M,N)/xH_{R_+}^i(M,N)$, then we have the
exact sequence $0\To U^i\To H_{R_+}^i(M,N/xN)\To
(0:_{H_{R_+}^{i+1}(M,N)}x)(-1)\To 0.$ Applying the functor
$\Gamma_{\frak m_0R}(-)$ to this sequence, we get the following
exact sequence $$\Gamma_{\frak m_0R}(H_{R_+}^i(M,N/xN))\To
\Gamma_{\frak m_0R}(0:_{H_{R_+}^i(M,N)}x)(-1)\To H_{\frak
m_0}(U^i)\hspace{0.5 cm}\maltese.$$ We note that $U^i$ is finitely
generated and $R_+$-torsion and so $H_{\frak m_0R}^i(U^i)\cong
H_{\frak m}^i(U^i)$ is Artinian. On the other hand, one can easily
show that $f(M,N/xN)\geq f(M,N)-1$ and so $i+1\leq f(M,N)$ implies
that $i\leq f(M,N)-1\leq f(M,N/xN)$; and hence by induction
hypothesis $\Gamma_{\frak m_0R}(H_{R_+}^i(M,N/xN))$ is Artinian.
Therefore by the sequence $\maltese$, the module $\Gamma_{\frak
m_0}(0:_{H_{R_+}^i(M,N)(-1)}x)=(0:_{\Gamma_{\frak
m_0}(H_{R_+}^i(M,N))(-1)}x)$ is Artinian. Now, since $\Gamma_{\frak
m_0}(H_{R_+}^i(M,N))$ is $x$-torsion, using Melkersson's Lemma this
module is Artinain.
\end{proof}

\medskip

\begin{lemma}\label{2.3}
Let $0\To M_1\To M\To M_2\To 0$ be a short exact sequence of
finitely generated graded $R$-modules. Then for any $R$-module
$N$, there is the following long exact sequence
$$\dots\To H_{R_+}^{i-1}(M_1,N)\To H_{R_+}^i(M_2,N)\To
H_{R_+}^i(M,N)\To H_{R_+}^i(M_1,N)\To\dots.$$
\end{lemma}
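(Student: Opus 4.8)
The plan is to deduce the long exact sequence from the ordinary $\mathrm{Ext}$ long exact sequence by passing to a direct limit, using the description $H_{R_+}^i(M,N)\cong\varinjlim_n {\rm Ext}_R^i(M/{R_+^n}M,N)$ recalled in the introduction. Since taking direct limits over the directed set $\Nn$ is an exact functor and commutes with cohomology, it suffices to produce, for each $n$, a long exact $\mathrm{Ext}$-sequence whose terms carry the correct direct limits and which is compatible with the transition maps as $n$ varies.

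First I would reduce the given sequence modulo $R_+^nM$. Tensoring $0\To M_1\To M\To M_2\To 0$ with $R/R_+^n$ is only right exact, so instead of a clean quotient sequence I read off the short exact sequence
$$0\To M_1/(M_1\cap R_+^nM)\To M/R_+^nM\To M_2/R_+^nM_2\To 0,$$
using that the image of $R_+^nM$ in $M_2$ is exactly $R_+^nM_2$ and that the image of $M_1$ in $M/R_+^nM$ is $M_1/(M_1\cap R_+^nM)$. These sequences are compatible with the canonical surjections as $n$ increases, so they form a direct system of short exact sequences. Applying ${\rm Ext}_R^\bullet(-,N)$ gives, for each $n$, the long exact sequence
$$\dots\To {\rm Ext}_R^{i-1}(M_1/(M_1\cap R_+^nM),N)\To {\rm Ext}_R^i(M_2/R_+^nM_2,N)\To {\rm Ext}_R^i(M/R_+^nM,N)\To {\rm Ext}_R^i(M_1/(M_1\cap R_+^nM),N)\To\dots,$$
and these assemble into a direct system of long exact sequences indexed by $n$.

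Taking the direct limit then yields a long exact sequence whose $M$- and $M_2$-terms are visibly $H_{R_+}^i(M,N)$ and $H_{R_+}^i(M_2,N)$, and whose structure and connecting maps are the natural ones induced by $M\To M_2$ and $M_1\hookrightarrow M$. The one point that genuinely needs justification — and where I expect the only real work to lie — is the identification of the limit of the $M_1$-terms. Here I would invoke the Artin--Rees lemma: since $R$ is Noetherian and $M$ is finitely generated, the filtration $\{M_1\cap R_+^nM\}_n$ is $R_+$-stable, so there is a constant $c$ with $R_+^nM_1\subseteq M_1\cap R_+^nM\subseteq R_+^{n-c}M_1$ for $n\geq c$, whence $\{M_1/(M_1\cap R_+^nM)\}_n$ and $\{M_1/R_+^nM_1\}_n$ are cofinal direct systems. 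Consequently
$$\varinjlim_n {\rm Ext}_R^i(M_1/(M_1\cap R_+^nM),N)\cong\varinjlim_n {\rm Ext}_R^i(M_1/R_+^nM_1,N)\cong H_{R_+}^i(M_1,N),$$
and substituting this into the limit sequence produces exactly the asserted long exact sequence. The finite generation hypothesis on $M_1$ and $M$ is used precisely to make this Artin--Rees comparison available; no hypothesis on $N$ is required, since $\mathrm{Ext}$ in the second variable and the exactness of direct limits impose none.
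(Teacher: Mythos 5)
Your argument is correct, but it follows a genuinely different route from the paper's. You work from Herzog's description $H_{R_+}^i(M,N)\cong\varinjlim_n{\rm Ext}_R^i(M/R_+^nM,N)$: you correctly avoid the trap of pretending that reduction mod $R_+^n$ is exact, instead extracting the short exact sequences $0\To M_1/(M_1\cap R_+^nM)\To M/R_+^nM\To M_2/R_+^nM_2\To 0$, applying the contravariant ${\rm Ext}$ long exact sequence, passing to the (exact, filtered) colimit, and then using Artin--Rees to interleave the filtrations $\{M_1\cap R_+^nM\}$ and $\{R_+^nM_1\}$ so that the $M_1$-terms have the right limit. That last step is the real content of your proof and is where the finite generation and Noetherian hypotheses enter. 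The paper instead takes an injective resolution $\mathcal{I}$ of $N$, uses the identification $H_{R_+}^i(M,N)=H^i({\rm Hom}_R(M,\Gamma_{R_+}(\mathcal{I})))$ together with the fact that $\Gamma_{R_+}$ of an injective module is again injective, so that ${\rm Hom}_R(-,\Gamma_{R_+}(I^j))$ is exact on the given short exact sequence; this yields a short exact sequence of complexes and hence the long exact sequence in one stroke. The paper's route is shorter and needs no Artin--Rees, at the price of invoking the structure theory of injectives over Noetherian rings; yours is more elementary in that respect and stays entirely within the direct-limit definition, at the price of the filtration comparison. Both are sound; just be aware that you are implicitly identifying two descriptions of $H_{R_+}^i(-,N)$ (the colimit of ${\rm Ext}$'s versus the derived-functor one the paper computes with), which is standard for finitely generated first arguments over a Noetherian ring.
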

\begin{proof}
Let $\mathcal{I}:= 0\To I^0\To I^1\To I^2\To \dots$ be an
injective resolution of $N$. We note that for any finitely
generated $R$-module $M$ and any $i$, there is an isomorphism
$H_{R_+}^i(M,N)=H^i({\rm Hom}_R(M,\Gamma_{R_+}(\mathcal{I})))$
and by the basic properties of section functor each
$\Gamma_{R_+}(I^i)$ in $\Gamma_{R_+}(\mathcal{I})$ is injective.
Thus there is an exact sequence of complexes $$0\To {\rm
Hom}_R(M_2,\Gamma_{R_+}(\mathcal{I}))\To {\rm
Hom}_R(M,\Gamma_{R_+}(\mathcal{I}))\To {\rm
Hom}_R(M_1,\Gamma_{R_+}(\mathcal{I}))\To 0.$$ Now, by using a
basic theorem in homology theory, there is the following long
exact sequence of $R$-modules $$\dots\To H^{i-1}({\rm
Hom}_R(M_1,\Gamma_{R_+}(\mathcal{I})))\To H^i({\rm
Hom}_R(M_2,\Gamma_{R_+}(\mathcal{I})))$$$$\To H^i({\rm
Hom}_R(M,\Gamma_{R_+}(\mathcal{I})))\To H^i({\rm
Hom}_R(M_1,\Gamma_{R_+}(\mathcal{I})))\To\dots$$ and this
completes the proof.
\end{proof}

\begin{theorem}\label{2.4}
Let $M$ be of finite projective dimension with pd$_R(M)=n$ and
$c:=c_{R_+}(N)$ be the largest positive integer $i$ such that
$H_{R_+}^i(N)$ is not zero. Then the following condition hold.\\
 (i) The graded module $H_{R_+}^{n+c}(M,N)/\frak
m_0H_{R_+}^{n+c}(M,N)$ is Artinian.\\
 (ii) If $c_{R_+}(M,N)$ is the
largest positive integer $i$ such that $H_{R_+}^i(M,N)$ is not
zero, then $c_{R_+}(M,N)\leq {\rm pd}_RM+c_{R_+}(N)=n+c$.
\end{theorem}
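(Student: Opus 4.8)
The plan is to collapse the two homological invariants into a single $\text{Ext}$ by means of the Grothendieck spectral sequence of a composite functor. For a fixed finitely generated $M$, the module $H_{R_+}^i(M,N)$ is the $i$-th right derived functor of $N\mapsto {\rm Hom}_R(M,\Gamma_{R_+}(N))$, and since $\Gamma_{R_+}$ sends injective modules to injective (hence ${\rm Hom}_R(M,-)$-acyclic) modules, there is a spectral sequence
$$E_2^{p,q}=\e{p}{R}{M}{\lc{R_+}{q}{N}}\Rr H_{R_+}^{p+q}(M,N).$$
First I would record its two vanishing ranges: $E_2^{p,q}=0$ whenever $p>n$, because ${\rm pd}_RM=n$, and $E_2^{p,q}=0$ whenever $q>c$, by the definition of $c=c_{R_+}(N)$. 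Thus the nonzero $E_2$-terms lie in the rectangle $0\leq p\leq n$, $0\leq q\leq c$.

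Part (ii) is then immediate: for $i>n+c$ every graded piece $E_\infty^{p,i-p}$ of $H_{R_+}^i(M,N)$ is a subquotient of $E_2^{p,i-p}$, which vanishes since $p\leq n$ forces $i-p>c$; hence $H_{R_+}^i(M,N)=0$ for all $i>n+c$, that is, $c_{R_+}(M,N)\leq n+c$. For part (i), the anti-diagonal $p+q=n+c$ meets the rectangle only at the corner $(n,c)$, and at that corner no differential $d_r$ can enter (its source would have $q$-degree $c+r-1>c$) or leave (its target would have $p$-degree $n+r>n$), so $E_2^{n,c}=E_\infty^{n,c}$ and the filtration of $H_{R_+}^{n+c}(M,N)$ degenerates to a natural isomorphism
$$H_{R_+}^{n+c}(M,N)\cong \e{n}{R}{M}{\lc{R_+}{c}{N}}.$$

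Next I would reduce modulo $\frak m_0$. Set $L:=\lc{R_+}{c}{N}$ and fix a finite graded free resolution $P_\bullet\To M$ of length $n$, so that $\e{n}{R}{M}{L}={\rm Coker}({\rm Hom}_R(P_{n-1},L)\To {\rm Hom}_R(P_n,L))$. Because ${\rm pd}_RM=n$ makes $\e{n+1}{R}{M}{-}=0$, the functor $\e{n}{R}{M}{-}$ is right exact; since moreover each ${\rm Hom}_R(P_k,L)$ is a finite direct sum of shifted copies of $L$, the right-exact functor $-/{\frak m_0}(-)$ commutes both with ${\rm Hom}_R(P_k,-)$ and with this cokernel, giving
$$H_{R_+}^{n+c}(M,N)/{\frak m_0}H_{R_+}^{n+c}(M,N)\cong \e{n}{R}{M}{L/{\frak m_0}L}.$$
Now $L/{\frak m_0}L=\lc{R_+}{c}{N}/{\frak m_0}\lc{R_+}{c}{N}$ is Artinian by the results of [BFT], [BRS], [RS] and [S] recalled in the introduction, and for finitely generated $M$ and Artinian $B$ the module $\e{n}{R}{M}{B}$ is Artinian, being a subquotient of the Artinian module ${\rm Hom}_R(P_n,B)$. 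This yields (i).

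The delicate point, and the step I expect to be the main obstacle, is the isomorphism $\e{n}{R}{M}{L}/{\frak m_0}\e{n}{R}{M}{L}\cong\e{n}{R}{M}{L/{\frak m_0}L}$: it relies essentially on right-exactness of $\e{n}{R}{M}{-}$ at the top homological degree $n={\rm pd}_RM$ and would fail for the lower $\text{Ext}$ modules, so the argument genuinely needs the corner $(n,c)$ of the spectral sequence to survive to $E_\infty$ on the nose rather than only up to a subquotient.
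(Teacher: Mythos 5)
Your proof is correct, but it follows a genuinely different route from the paper's. The paper argues by induction on $n=\mathrm{pd}_RM$: from a syzygy sequence $0\to K\to R^t\to M\to 0$ with $\mathrm{pd}_RK=n-1$ and the long exact sequence in the first variable (its Lemma on the contravariant behaviour of $H_{R_+}^{\bullet}(-,N)$), the vanishing $H_{R_+}^{n+c}(R^t,N)=H_{R_+}^{n+c}(N)^t=0$ yields an epimorphism $H_{R_+}^{n+c-1}(K,N)\twoheadrightarrow H_{R_+}^{n+c}(M,N)$, which remains an epimorphism after applying $R_0/\mathfrak{m}_0\otimes_{R_0}-$; iterating reduces everything to the free case, i.e.\ to [RS, Theorem 2.1]. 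You instead collapse the composite-functor spectral sequence $E_2^{p,q}=\mathrm{Ext}_R^p(M,H_{R_+}^q(N))\Rightarrow H_{R_+}^{p+q}(M,N)$ at the corner $(n,c)$ to get the clean identification $H_{R_+}^{n+c}(M,N)\cong \mathrm{Ext}_R^n(M,H_{R_+}^c(N))$, and then use right-exactness of the top $\mathrm{Ext}$ to commute with $-\otimes_{R_0}R_0/\mathfrak{m}_0$. Your vanishing argument for (ii) is the same rectangle count the paper's induction encodes implicitly, and both proofs ultimately rest on the same external input, namely the Artinianness of $H_{R_+}^c(N)/\mathfrak{m}_0H_{R_+}^c(N)$ at the \emph{top} nonvanishing degree $c$ --- be precise here and cite [RS, Theorem 2.1] (equivalently the Katzman--Sharp circle of results) rather than the whole list [BFT], [BRS], [RS], [S], since the statement is not available for arbitrary $i$. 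What your approach buys is strictly more information: the isomorphism $H_{R_+}^{n+c}(M,N)\cong\mathrm{Ext}_R^n(M,H_{R_+}^c(N))$ and the base-change formula $H_{R_+}^{n+c}(M,N)/\mathfrak{m}_0H_{R_+}^{n+c}(M,N)\cong\mathrm{Ext}_R^n(M,H_{R_+}^c(N)/\mathfrak{m}_0H_{R_+}^c(N))$ are of independent interest; what the paper's approach buys is elementarity (only the long exact sequence of Lemma \ref{2.3}, no spectral sequences) at the cost of producing only a surjection rather than an isomorphism. One hypothesis you use tacitly and should record: since $R_0$ is local, finitely generated graded projectives over $R$ are graded free, so the minimal graded free resolution of $M$ does terminate at step $n$, which is what makes $\mathrm{Ext}_R^n(M,-)$ the cokernel of $\mathrm{Hom}_R(P_{n-1},-)\to\mathrm{Hom}_R(P_n,-)$.
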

\begin{proof}
(i) We proceed by induction on pd$_R(M)=n$. If $n=0$, the the
result is clear by [RS, Theorem 2.1]. Now, suppose inductively
that the result has been proved for all values smaller that $n>0$
and so we prove this for $n$. Since pd$_R(M)=n$, there exists a
positive integer $t$ and an exact sequence of graded modules $0\To
K\To R^t\To M\To 0$ such that pd$_R(K)=n-1$. In view of Lemma
\ref{2.3}, if we apply the functor $H_{R_+}^{n+c}(-,N)$ to this
exact sequence, we have the following exact sequence
$$H_{R_+}^{n+c-1}(K,N)\To H_{R_+}^{n+c}(M,N)\To H_{R_+}^{n+c}(R^t,N).$$
 We note that $n+c>c=c_{R_+}(N)$ and so $H_{R_+}^{n+c}(N)=0$. Now,
 application of the functor $R_0/{\frak m_0}\otimes_R-$ to the
 above exact sequence induces the following epimorphisms
$$H_{R_+}^{n+c-1}(K,N)/{\frak m_0}H_{R_+}^{n+c-1}(K,N)\To
 H_{R_+}^{n+c}(M,N)/{\frak m_0}H_{R_+}^{n+c}(M,N)\To 0.$$
 By using induction hypothesis, the graded module $H_{R_+}^{n+c-1}(K,N)/
 {\frak m_0}H_{R_+}^{n+c-1}(K,N)$ is Artinian. Thus the result
 follows by the above epimorphism. (ii) In this part, similar to (i),
 we can apply an easy induction on pd$_RM=n$.
 \end{proof}

\medskip

\medskip

\begin{theorem}\label{2.10}
Let $M$ be of finite projective dimension and let $a_{R_+}(M,N)$
be the largest non-negative integer $i$ such that $H_{R_+}^i(M,N)$
is not Artinian.
 Then we have the following conditions.\\
(i) $a_{R_+}(M,N)\leq{\rm pd}_RM+a_{R_+}(N)$, where $a_{R_+}(N)$
is the largest non-negative integer $i$ such that $H_{R_+}^i(N)$
is not Artinian.\\
(ii) $H_{R_+}^{a+n}(M,N)/\frak m_0H_{R_+}^{a+n}(M,N)$ is Artinian,
where $a=a_{R_+}(N)$ and  pd$_RM=n$.
\end{theorem}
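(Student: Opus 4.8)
The plan is to prove both parts by induction on $n=\mathrm{pd}_R M$, mirroring the structure of Theorem~\ref{2.4} but systematically replacing ``nonzero'' by ``non-Artinian''. The two engines are Lemma~\ref{2.3} (the long exact sequence in the first variable) and the elementary fact that the class of Artinian modules is closed under submodules, quotients, and extensions. A recurring reduction is that a finitely generated graded projective $R$-module is graded free (graded Nakayama, since $R$ has the unique graded maximal ideal $\mathfrak{m}$), say $M\cong\bigoplus_j R(d_j)$, so that $H_{R_+}^i(M,N)\cong\bigoplus_j H_{R_+}^i(N)(d_j)$ and shifts do not affect Artinianness.

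For part (i) I would first dispose of the base case $n=0$: by the displayed isomorphism, $H_{R_+}^i(M,N)$ is a finite sum of shifts of $H_{R_+}^i(N)$, which is Artinian for every $i>a=a_{R_+}(N)$ by definition of $a$; hence $a_{R_+}(M,N)\le a$. For the inductive step I choose a short exact sequence $0\to K\to F\to M\to 0$ with $F$ graded free and $\mathrm{pd}_R K=n-1$. Feeding this into Lemma~\ref{2.3} gives, for each $i>n+a$, an exact strand $H_{R_+}^{i-1}(K,N)\to H_{R_+}^i(M,N)\to H_{R_+}^i(F,N)$. The right-hand term is a sum of shifts of $H_{R_+}^i(N)$, Artinian because $i>a$; the left-hand term is Artinian because $i-1>(n-1)+a\ge a_{R_+}(K,N)$ by the induction hypothesis. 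Exactness then identifies $H_{R_+}^i(M,N)$ as an extension of a submodule of $H_{R_+}^i(F,N)$ by a quotient of $H_{R_+}^{i-1}(K,N)$, both Artinian, so it is Artinian; this gives $a_{R_+}(M,N)\le n+a$.

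For part (ii) the same induction applies, but now at the critical degree $i=n+a$. The base case $n=0$ reduces, via $M\cong\bigoplus_j R(d_j)$, to the assertion that $H_{R_+}^a(N)/\mathfrak{m}_0H_{R_+}^a(N)$ is Artinian for $a=a_{R_+}(N)$; this is the corresponding statement for ordinary graded local cohomology (the case $M=R$), which is where I would invoke the literature (cf.\ [RS], [BRS]). For the inductive step I again use $0\to K\to F\to M\to 0$ and Lemma~\ref{2.3} at $i=n+a$, obtaining $H_{R_+}^{n+a-1}(K,N)\stackrel{\alpha}{\to}H_{R_+}^{n+a}(M,N)\stackrel{\beta}{\to}H_{R_+}^{n+a}(F,N)$. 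The decisive difference from Theorem~\ref{2.4} is that the right-hand term is now Artinian rather than zero: since $n+a>a$, it is a sum of shifts of the Artinian module $H_{R_+}^{n+a}(N)$. Setting $L=\mathrm{im}\,\alpha=\ker\beta$, I obtain a short exact sequence $0\to L\to H_{R_+}^{n+a}(M,N)\to C\to 0$ in which $C\hookrightarrow H_{R_+}^{n+a}(F,N)$ is Artinian and $L$ is a quotient of $H_{R_+}^{(n-1)+a}(K,N)$.

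Finally I apply the right-exact functor of reduction modulo $\mathfrak{m}_0$, namely $R_0/\mathfrak{m}_0\otimes_{R_0}-$, to this short exact sequence. By the induction hypothesis for (ii) applied to $K$, the module $H_{R_+}^{(n-1)+a}(K,N)/\mathfrak{m}_0H_{R_+}^{(n-1)+a}(K,N)$ is Artinian, and the surjection onto $L$ forces $L/\mathfrak{m}_0L$ to be Artinian, while $C/\mathfrak{m}_0C$ is Artinian as a quotient of $C$. The resulting right-exact sequence $L/\mathfrak{m}_0L\to H_{R_+}^{n+a}(M,N)/\mathfrak{m}_0H_{R_+}^{n+a}(M,N)\to C/\mathfrak{m}_0C\to 0$ then exhibits the middle term as an extension of two Artinian modules, hence Artinian. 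I expect the only genuine obstacle to be the base case of (ii) -- the top non-Artinian graded local cohomology having Artinian cokernel modulo $\mathfrak{m}_0$ -- together with the care needed to remember that, unlike in Theorem~\ref{2.4}, the module $H_{R_+}^{n+a}(M,N)$ itself need not be Artinian, so one must pass to the $\mathfrak{m}_0$-quotient (and use right-exactness rather than exactness) before Artinianness can be propagated.
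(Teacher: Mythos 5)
Your proposal is correct and follows essentially the same route as the paper: induction on $\mathrm{pd}_R M$ using the short exact sequence $0\to K\to F\to M\to 0$ and the long exact sequence of Lemma~\ref{2.3}, with the image/kernel decomposition at degree $n+a$ and right-exactness of $R_0/\mathfrak{m}_0\otimes_{R_0}-$ handling part (ii) exactly as the paper does with its modules $X=\mathrm{Im}(\alpha)$ and $Y=\mathrm{Im}(\beta)$. Your treatment is in fact more explicit than the paper's (which leaves the base cases and the final extension argument as ``clear''); the only cosmetic difference is that the paper cites [S, Theorem 2.3] for the base case of (ii) where you point to [RS]/[BRS].
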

\begin{proof}
(i) Let pd$_RM=n$ and $a=a_{R_+}(N)$. We prove the assertion by
induction on pd$_RM=n$. If $n=0$, the result is clear. Suppose,
inductively that $n>0$ and the result has been proved for all
values smaller than $n$ and so we prove it for $n$. As pd$_RM=n$,
there exists a positive integer $t$ and an exact sequence $0\To
M_1\To R^t\To M\To 0$ of $R$-modules such that pd$_RM_1=n-1$. In
view of Lemma \ref{2.3}, if we apply the functor $H_{R_+}^i(-,N)$
to the above exact sequence, we get the following exact sequence
of $R$-modules $H_{R_+}^{i-1}(M_1,N)\To H_{R_+}^i(M,N)\To
H_{R_+}^i(N)^t.$ Now, consider $i>a+n$. We note that
$i-1>a+n-1={\rm pd}_RM_1+a_{R_+}(N)$ and $i>a+n>a$. Thus, using
induction hypothesis, $H_{R_+}^{i-1}(M_1,N)$ and $H_{R_+}^i(N)$
are Artinian. Now, in view of the above exact sequence the result
follows. (ii) To prove this part, we again proceed by induction
on pd$_RM=n$. If $n=0$, then the result follows by [S, Theorem
2.3]. Now, suppose, inductively that $n>0$ and the result has
been proved for all values smaller than $n$ and so we prove it
for $n$. By a similar proof which mentioned in (i), there exists
an exact sequence of $R$-modules
$H_{R_+}^{a+n-1}(M_1,N)\stackrel{\alpha}\To
H_{R_+}^{a+n}(M,N)\stackrel{\beta}\To H_{R_+}^{a+n}(N)^t.$
Consider $X={\rm Im}(\alpha)$ and $Y={\rm Im}(\beta)$.  Since
$a+n>a$, the module $Y/\frak m_0 Y$ is Artinian. On the other
hand, by using induction hypothesis, one can easily see that
$X/\frak m_0X$ is Artinian. Now, the result follows easily.

\end{proof}

\begin{lemma}\label{2.4}
Let $i\in\mathbb{N}_0$ and $H_{R_+}^i(M,N)$ is $R_+$-cofinite. Then
$\Gamma_{\frak m_0R}(H_{R_+}^i(M,N))$ is Artinian.
\end{lemma}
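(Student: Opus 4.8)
The plan is to reduce the statement to a socle computation and then invoke Melkersson's Lemma, exactly as in the final step of Proposition~\ref{2.1}. Write $X:=H_{R_+}^i(M,N)$. Since $X$ is $R_+$-cofinite it is in particular $R_+$-torsion, so $\Gamma_{R_+}(X)=X$. The first step would be to identify $\Gamma_{\frak m_0R}(X)$ with the section functor at the graded maximal ideal $\frak m=\frak m_0\oplus R_+$. Because $\frak m=\frak m_0R+R_+$, one has $\Gamma_{\frak m}(X)=\Gamma_{\frak m_0R}(X)\cap\Gamma_{R_+}(X)$ for every graded $R$-module, and the $R_+$-torsionness of $X$ then collapses this to $\Gamma_{\frak m_0R}(X)=\Gamma_{\frak m}(X)$. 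Thus it suffices to prove that the $\frak m$-torsion module $\Gamma_{\frak m}(X)$ is Artinian.

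Next I would compute the socle. Since $\Gamma_{\frak m}(X)$ is $\frak m$-torsion, by the graded form of Melkersson's criterion it is enough to show that $(0:_{\Gamma_{\frak m}(X)}\frak m)$ is finitely generated, equivalently of finite length over $k:=R_0/\frak m_0$. Any element of $X$ annihilated by $\frak m$ automatically lies in $\Gamma_{\frak m}(X)$, so $(0:_{\Gamma_{\frak m}(X)}\frak m)=(0:_X\frak m)$. From $\frak m=\frak m_0R+R_+$ we obtain the containment $(0:_X\frak m)\subseteq(0:_XR_+)={\rm Hom}_R(R/R_+,X)$, and the right-hand module is finitely generated precisely because $X$ is $R_+$-cofinite. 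As a submodule of a finitely generated module over the Noetherian ring $R$, the socle $(0:_X\frak m)$ is therefore finitely generated; being annihilated by $\frak m$ it is a finitely generated $R/\frak m=k$-vector space, hence of finite length.

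Finally, $\Gamma_{\frak m}(X)$ is an $\frak m$-torsion module whose socle has finite length and is in particular Artinian, so Melkersson's Lemma applied to the ideal $\frak m$ yields that $\Gamma_{\frak m}(X)=\Gamma_{\frak m_0R}(X)$ is Artinian, as required. The only delicate point is the identity $\Gamma_{\frak m_0R}(X)=\Gamma_{\frak m}(X)$: one must genuinely use that $X$ is $R_+$-torsion to fold the two section functors into the one at $\frak m$, and then check that the socle of the $\frak m_0R$-torsion submodule agrees with the full $\frak m$-socle of $X$. Once the socle is located inside the finitely generated module ${\rm Hom}_R(R/R_+,X)$, the cofiniteness hypothesis does all the remaining work and no further analysis of the generalized local cohomology is needed.
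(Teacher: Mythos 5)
Your proof is correct and follows essentially the same route as the paper: both arguments extract an Artinian socle from the finitely generated module ${\rm Hom}_R(R/R_+,H_{R_+}^i(M,N))$ supplied by cofiniteness and then conclude with Melkersson's Lemma. The only cosmetic difference is that you apply Melkersson to the ideal $\frak m$ after identifying $\Gamma_{\frak m_0R}(X)$ with $\Gamma_{\frak m}(X)$, whereas the paper applies it to $R_+$ acting on $\Gamma_{\frak m_0R}(X)$, whose $R_+$-socle it shows is finitely generated and $\frak m$-torsion, hence Artinian.
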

\begin{proof}
Since $H_{R_+}^i(M,N)$ is $R_+$-cofinite, ${\rm
Hom}_R(R/R_+,H_{R_+}^i(M,N))$ is finitely generated and
$R_+$-torsion. Thus $\Gamma_{\frak m_0R}({\rm
Hom}_R(R/R_+,H_{R_+}^i(M,N)))\cong\Gamma_{\frak
m_0R}((0:_{H_{R_+}^i(M,N)}R_+))=(0:_{\Gamma_{\frak
m_0}(H_{R_+}^i(M,N))}R_+)$ is finitely generated and $\frak
m$-torsion. It implies that the last term is Artinian. On the other
hand, since $\Gamma_{\frak m_0R}(H_{R_+}^i(M,N))$ is $R_+$-torsion,
it is Artinian.
\end{proof}
\medskip

\begin{proposition}\label{2.5}
Let $M$ be of finite projective dimension and let $R_+$ be a
principal graded ideal of $R$. Then $\Gamma_{\frak
m_0}(H_{R_+}^i(M,N))$ is Artinian for each $i$.
\end{proposition}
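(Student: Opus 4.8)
The plan is to prove the stronger statement that $H_{R_+}^i(M,N)$ is $R_+$-cofinite for every $i$, and then to apply Lemma \ref{2.4} to deduce that $\Gamma_{\frak m_0R}(H_{R_+}^i(M,N))$ is Artinian. The hypothesis that $R_+$ is principal enters in two ways. First, an ideal generated by a single element has cohomological dimension at most one, so $H_{R_+}^q(N)=0$ for all $q\geq 2$, while $H_{R_+}^0(N)=\Gamma_{R_+}(N)$ is finitely generated and $H_{R_+}^1(N)$ is $R_+$-cofinite; in particular every ordinary local cohomology module $H_{R_+}^q(N)$ is $R_+$-cofinite. Second, and more crucially, for a principal ideal the $R_+$-cofinite modules form an abelian subcategory of the $R_+$-torsion modules, so that cofiniteness is inherited by submodules, quotients and extensions; this is the structural fact that will let me transport cofiniteness along exact sequences.

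I would first record that each $H_{R_+}^i(M,N)$ is $R_+$-torsion, being a direct limit of the modules ${\rm Ext}^i_R(M/R_+^nM,N)$, every one of which is annihilated by a power of $R_+$; thus for these modules cofiniteness amounts to the finiteness of the modules ${\rm Ext}^j_R(R/R_+,-)$ required by Lemma \ref{2.4}. The heart of the argument is then an induction on ${\rm pd}_RM=n$. When $n=0$ the module $M$ is projective, hence a direct summand of a free module $R^t$; since $H_{R_+}^i(-,N)$ is additive, $H_{R_+}^i(M,N)$ is a direct summand of $H_{R_+}^i(R^t,N)\cong H_{R_+}^i(N)^t$, which is $R_+$-cofinite by the principal-ideal case, and a direct summand of an $R_+$-cofinite module is again $R_+$-cofinite. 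For the inductive step I would choose a short exact sequence $0\To K\To R^t\To M\To 0$ with ${\rm pd}_RK=n-1$ and feed it into the long exact sequence of Lemma \ref{2.3}, obtaining
$$H_{R_+}^{i-1}(K,N)\stackrel{\alpha}\To H_{R_+}^i(M,N)\stackrel{\beta}\To H_{R_+}^i(R^t,N).$$
By the induction hypothesis $H_{R_+}^{i-1}(K,N)$ is $R_+$-cofinite, and $H_{R_+}^i(R^t,N)\cong H_{R_+}^i(N)^t$ is $R_+$-cofinite. Reading this as a short exact sequence $0\To {\rm Im}\,\alpha\To H_{R_+}^i(M,N)\To {\rm Im}\,\beta\To 0$, the left term is a quotient of an $R_+$-cofinite module and the right term a submodule of one, so both are $R_+$-cofinite; hence so is the extension $H_{R_+}^i(M,N)$. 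Applying Lemma \ref{2.4} for each $i$ then finishes the proof.

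The main obstacle is justifying the closure of the $R_+$-cofinite modules under the submodule, quotient and extension operations used in the inductive step, since for a general ideal the $I$-cofinite modules need not form an abelian category. What rescues the argument is precisely that $R_+$ is principal, equivalently of cohomological dimension at most one: in that case cofiniteness can be detected through only finitely many ${\rm Ext}^j_R(R/R_+,-)$, and chasing these through the long exact sequences attached to $0\To{\rm Im}\,\alpha\To H_{R_+}^i(M,N)\To{\rm Im}\,\beta\To 0$ yields the needed closure. I would therefore isolate, as preliminary lemmas, both the cofiniteness of the ordinary modules $H_{R_+}^q(N)$ and the abelian-category property of $R_+$-cofinite modules for a principal ideal, both of which are standard in the cohomological dimension one setting; after that the displayed induction is purely formal.
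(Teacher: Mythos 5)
Your proof takes essentially the same route as the paper's: both reduce the statement to showing that $H_{R_+}^i(M,N)$ is $R_+$-cofinite for every $i$ and then invoke the preceding lemma (Lemma~\ref{2.4}) to get Artinianness of $\Gamma_{\frak m_0R}(H_{R_+}^i(M,N))$. The only difference is that the paper obtains the cofiniteness by citing [DS, Theorem 2.8], whereas you reprove it by induction on ${\rm pd}_RM$ via Lemma~\ref{2.3}; that argument is sound, but it does rest on the two preliminary facts you flag (cofiniteness of $H_{R_+}^q(N)$ for principal $R_+$, and the closure of $R_+$-cofinite torsion modules under the sub/quotient/extension operations in the cohomological dimension one case, both due to Melkersson), so in effect you are supplying a proof of the cited theorem rather than bypassing it.
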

\begin{proof}
As $R_+$ is principal, using [DS, Theorem 2.8], $H_{R_+}^i(M,N)$ is
$R_+$-cofinite for each $i$. Now, the assertion follows by the
previous lemma.
\end{proof}

\medskip

\begin{lemma}\label{2.2}
Let $M$ and $N$ be finitely generated graded $R$-modules and $N$
be $\frak m_0$-torsion. Then $H_{R_+}^i(M,N)$ is Artinian for all
$i$.
\end{lemma}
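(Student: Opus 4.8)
The plan is to compute $H_{R_+}^i(M,N)$ directly from the description recorded in the proof of Lemma~\ref{2.3}, namely $H_{R_+}^i(M,N)=H^i(\mathrm{Hom}_R(M,\Gamma_{R_+}(\mathcal I)))$, where $\mathcal I\colon 0\To I^0\To I^1\To\cdots$ is a \emph{minimal} graded injective resolution of $N$, and to show that already each individual term $\mathrm{Hom}_R(M,\Gamma_{R_+}(I^i))$ is Artinian. Since the class of Artinian modules is closed under subquotients, and $H_{R_+}^i(M,N)$ is a subquotient of the $i$-th term of the complex $\mathrm{Hom}_R(M,\Gamma_{R_+}(\mathcal I))$, this immediately yields the assertion.

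First I would exploit the hypothesis that $N$ is $\mathfrak m_0$-torsion. As $N$ is finitely generated, this means $\mathfrak m_0^tN=0$ for some $t$, so that $\Su(N)\subseteq V(\mathfrak m_0R)$; in particular every prime occurring in the support of $N$ contains $\mathfrak m_0 R$. In the minimal graded injective resolution one has $I^i=\bigoplus_{\mathfrak p}E_R(R/\mathfrak p)^{\mu_i(\mathfrak p,N)}$ with finite Bass numbers $\mu_i(\mathfrak p,N)$, and only graded primes $\mathfrak p\in\Su(N)$ occur; hence every such $\mathfrak p$ satisfies $\mathfrak m_0R\subseteq\mathfrak p$.

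Next I would apply the section functor term by term. For a graded prime $\mathfrak p$ one has $\Gamma_{R_+}(E_R(R/\mathfrak p))=E_R(R/\mathfrak p)$ if $R_+\subseteq\mathfrak p$ and $\Gamma_{R_+}(E_R(R/\mathfrak p))=0$ otherwise. For the primes surviving above we already know $\mathfrak m_0R\subseteq\mathfrak p$, so the condition $R_+\subseteq\mathfrak p$ forces $\mathfrak m=\mathfrak m_0R+R_+\subseteq\mathfrak p$, whence $\mathfrak p=\mathfrak m$ by maximality of the graded maximal ideal. Consequently $\Gamma_{R_+}(I^i)\cong E^{\mu_i(\mathfrak m,N)}$, a \emph{finite} direct sum of copies of the graded injective hull $E:=E_R(R/\mathfrak m)$ of the residue field.

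Finally I would invoke graded Matlis duality over the $*$local ring $(R,\mathfrak m)$. Since $M$ is finitely generated, the graded Matlis dual $\mathrm{Hom}_R(M,E)$ is Artinian, and therefore $\mathrm{Hom}_R(M,\Gamma_{R_+}(I^i))\cong\mathrm{Hom}_R(M,E)^{\mu_i(\mathfrak m,N)}$ is a finite direct sum of Artinian modules, hence Artinian. Passing to cohomology, each $H_{R_+}^i(M,N)$ is a subquotient of this Artinian module and so is Artinian, which completes the argument. The only points that genuinely need care are the finiteness of the Bass numbers $\mu_i(\mathfrak p,N)$ (so that the relevant $\mathrm{Hom}$'s are finite, rather than infinite, direct sums) and the Artinianness of the graded Matlis dual of a finitely generated module; both are standard over a $*$local Noetherian ring and constitute the essential technical input. (Alternatively, one could first identify $H_{R_+}^i(M,N)\cong H_{\mathfrak m}^i(M,N)$ via the same support observation and then deduce Artinianness from the hyper-$\mathrm{Ext}$ spectral sequence $\mathrm{Ext}^p_R(M,H_{\mathfrak m}^q(N))\Rightarrow H_{\mathfrak m}^{p+q}(M,N)$, but the direct computation above is more self-contained.)
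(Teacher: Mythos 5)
Your argument is correct, and it reaches the conclusion by a more explicit route than the paper does. Both proofs turn on the same key observation --- that for an $\mathfrak m_0$-torsion $N$ the functor $\Gamma_{R_+}$ effectively becomes $\Gamma_{\mathfrak m}$ --- but they implement it differently. The paper chooses an $\mathfrak m_0$-torsion injective resolution $\mathcal I$ of $N$, notes $\Gamma_{R_+}(\mathcal I)=\Gamma_{R_+}(\Gamma_{\mathfrak m_0R}(\mathcal I))=\Gamma_{\mathfrak m}(\mathcal I)$, concludes $H_{R_+}^i(M,N)\cong H_{\mathfrak m}^i(M,N)$, and then cites as a ``basic property'' the Artinianness of generalized local cohomology at the graded maximal ideal; this is exactly the alternative you sketch in your closing parenthesis. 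You instead work with the \emph{minimal} graded injective resolution, use the support condition $\Su(N)\subseteq V(\mathfrak m_0R)$ together with the vanishing of Bass numbers off the support to show that $\Gamma_{R_+}(I^i)$ is a finite direct sum of copies of $E_R(R/\mathfrak m)$, and then get Artinianness of each term $\hm{R}{M}{\Gamma_{R_+}(I^i)}$ from graded Matlis duality (or simply from the embedding $\hm{R}{M}{E}\hookrightarrow E^n$ with $E$ Artinian), so that every $H_{R_+}^i(M,N)$ is a subquotient of an Artinian module. What your version buys is self-containedness: it does not black-box the Artinianness of $H_{\mathfrak m}^i(M,N)$, and it correctly isolates the two nontrivial inputs (finiteness of the Bass number at $\mathfrak m$, Artinianness of $\hm{R}{M}{E}$). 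What the paper's version buys is brevity and independence from the structure theory of graded injectives. One small point worth making explicit in your write-up: the terms of the minimal graded resolution are finite direct sums of \emph{shifted} copies $E_R(R/\mathfrak m)(n_j)$ rather than literal powers, but since shifting preserves Artinianness this does not affect the argument.
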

\begin{proof}
Since $N$ is $\frak m_0$-torsion, there exists an injective
resolution $\mathcal{I}:= 0\To I^0\To I^1\To\dots$ such that each
$I^i$ is $\frak m_0$-torsion.  By the definition of generalized
local cohomology there are the
following isomorphisms\\
$H_{R_+}^i(M,N)\cong H^i(\Gamma_{R_+}(M,\mathcal{I}))\cong
H^i(\Gamma_{R_+}(M,\Gamma_{\frak m_0R}( \mathcal{I})))\cong
H^i({\rm Hom}(M,\Gamma_{\frak m}(\mathcal{I}))\cong H_{\frak
m}^i(M,N).$ By the basic properties of generalized local
cohomology, the last term is Artinian; and hence the result
follows.
\end{proof}

\medskip

\begin{proposition}\label{2.6}
Let ${\rm dim}(R_0)\leq 1$. Then for every $i\in\mathbb{N}_0$, the
module $H_{R_+}^i(M,N)/\\\frak m_0H_{R_+}^i(M,N)$ is Artinian
\end{proposition}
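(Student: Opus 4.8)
The plan is to split on $\dim R_0$. When $\dim R_0=0$ the ring $R_0$ is Artinian, so $\mathfrak{m}_0$ is nilpotent and $N$ is automatically $\mathfrak{m}_0$-torsion; hence Lemma \ref{2.2} already gives that $H_{R_+}^i(M,N)$ is Artinian, and so is its quotient $H_{R_+}^i(M,N)/\mathfrak{m}_0H_{R_+}^i(M,N)$, for every $i$. Thus the whole content lies in the case $\dim R_0=1$, and throughout I write $A:=H_{R_+}^i(M,N)$.

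First I would reduce to the case $\Gamma_{\mathfrak{m}_0R}(N)=0$. Setting $\bar N:=N/\Gamma_{\mathfrak{m}_0R}(N)$ and applying $H_{R_+}^i(M,-)$ to $0\To\Gamma_{\mathfrak{m}_0R}(N)\To N\To \bar N\To 0$ produces a long exact sequence in which the terms $H_{R_+}^i(M,\Gamma_{\mathfrak{m}_0R}(N))$ and $H_{R_+}^{i+1}(M,\Gamma_{\mathfrak{m}_0R}(N))$ are Artinian by Lemma \ref{2.2}. Writing $b\colon A\To H_{R_+}^i(M,\bar N)$ for the induced map, ${\rm Ker}\,b$ is Artinian and $H_{R_+}^i(M,\bar N)/{\rm Im}\,b$ is Artinian. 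Tensoring the two short exact sequences built from $b$ with $R_0/\mathfrak{m}_0$ and using that ${\rm Tor}_1^{R_0}(R_0/\mathfrak{m}_0,-)$ of an Artinian module is Artinian (it is a subquotient of finitely many copies of that module, via a finite free presentation of $R_0/\mathfrak{m}_0$), one checks that $A/\mathfrak{m}_0A$ is Artinian as soon as $H_{R_+}^i(M,\bar N)/\mathfrak{m}_0H_{R_+}^i(M,\bar N)$ is. Hence I may replace $N$ by $\bar N$ and assume $\Gamma_{\mathfrak{m}_0R}(N)=0$.

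Now for the core case $\dim R_0=1$ with $\Gamma_{\mathfrak{m}_0R}(N)=0$. Since no associated prime of $N$ contains $\mathfrak{m}_0R$, and only finitely many primes are in play (the associated primes of $N$ contracted to $R_0$, together with the one-dimensional minimal primes of $R_0$), prime avoidance yields an element $x\in\mathfrak{m}_0$ that is simultaneously a non-zerodivisor on $N$ and a parameter of $R_0$. Then $R_0/xR_0$ is Artinian, so $\mathfrak{m}_0^s\subseteq xR_0$ for some $s\geq 1$; the decisive consequence is $\mathfrak{m}_0^s(N/xN)=0$, i.e. $N/xN$ is $\mathfrak{m}_0$-torsion, whence $H_{R_+}^i(M,N/xN)$ is Artinian by Lemma \ref{2.2}. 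Feeding the degree-zero exact sequence $0\To N\stackrel{x}\To N\To N/xN\To 0$ into $H_{R_+}^i(M,-)$ gives $A/xA\hookrightarrow H_{R_+}^i(M,N/xN)$, so $A/xA$ is Artinian; and since $xA\subseteq\mathfrak{m}_0A$, the module $A/\mathfrak{m}_0A$ is a quotient of $A/xA$ and therefore Artinian.

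I expect the main obstacle to be the simultaneous choice of $x$: it must be a non-zerodivisor on $N$ (which is exactly what the reduction $\Gamma_{\mathfrak{m}_0R}(N)=0$ secures, to keep $0\To N\stackrel{x}\To N\To N/xN\To 0$ exact and get the embedding $A/xA\hookrightarrow H_{R_+}^i(M,N/xN)$) while at the same time being a parameter of the one-dimensional ring $R_0$, for only then is $R_0/xR_0$ Artinian and $\mathfrak{m}_0^s\subseteq xR_0$, turning $N/xN$ into an $\mathfrak{m}_0$-torsion module accessible to Lemma \ref{2.2}. This is precisely where the hypothesis $\dim R_0\le 1$ is consumed. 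Should prime avoidance be awkward over a finite residue field, I would first pass, as in Proposition \ref{2.1}, to the faithfully flat extension $R_0[\mathbf{x}]_{\mathfrak{m}_0R_0[\mathbf{x}]}$, which preserves both $\dim R_0\le 1$ and Artinianness in both directions.
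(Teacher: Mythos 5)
Your proposal is correct and follows essentially the same route as the paper: dispose of $\dim(R_0)=0$ via Lemma \ref{2.2}, reduce to $\Gamma_{\frak m_0R}(N)=0$ through the long exact sequence, and then kill $N/xN$ with an element $x\in\frak m_0$ so that $H_{R_+}^i(M,N)/\frak m_0H_{R_+}^i(M,N)$ is a quotient of the Artinian module $H_{R_+}^i(M,N)/xH_{R_+}^i(M,N)\hookrightarrow H_{R_+}^i(M,N/xN)$. Your only real departure is the explicit prime-avoidance choice of $x$ as simultaneously $N$-regular and a parameter of $R_0$ --- a point the paper glosses over when it asserts that $N/xN$ is $\frak m_0$-torsion for an arbitrary $N$-regular $x\in\frak m_0$ --- so your version is, if anything, slightly more careful at that step.
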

\begin{proof}
If ${\rm dim}(R_0)=0$, then $N$ is $\frak m_0$-torsion and so in
view of Lemma \ref{2.2}, the graded module $H_{R_+}^i(M,N)$ is
Artinian for each $i$. By using [BFT, Lemma 2.2 ] we can get the
assertion. Now, suppose that dim$(R_0)=1$ and consider the short
exact sequence $0\To \Gamma_{\frak m_0R}(N)\To N\To
N/\Gamma_{\frak m_0R}(N)\To 0$ of graded $R$-modules. Application
of the functor $H_{R_+}^i(M,-)$ to this exact sequence induces the
following exact sequence $H_{R_+}^{i}(M,\Gamma_{\frak m_0R}(N))\To
H_{R_+}^i(M,N)\To H_{R_+}^i(M,N/\Gamma_{\frak m_0R}(N))\To
H_{R_+}^{i+1}(M,\Gamma_{\frak m_0R}(N)).$ In view of Lemma
\ref{2.2} and using [BFT, Lemma 2.2], one can easily show that
for each $i$, the module $R_0/{\frak
m_0}\otimes_{R_0}H_{R_+}^i(M,N)$ is Artinian if and only if
$R_0/{\frak m_0}\otimes_{R_0}H_{R_+}^i(M,N/\Gamma_{\frak
m_0}(N))$ is Artinian. So we may assume that $\Gamma_{\frak
m_0R}(N)=0$. Now, this fact implies that there exists an element
$x\in\frak m_0$ which is a non-zerodivisor of $M$ and then there
exists a short exact sequence $0\To N\stackrel{x.}\To N\To
N/xN\To 0$ of graded $R$-modules. Application of the functor
$H_{R_+}^i(M,-)$ to the above exact sequence induces the
following exact sequence
$$H_{R_+}^i(M,N)\stackrel{x.}\To H_{R_+}^i(M,N)\To
H_{R_+}^i(M,N/xN)\To H_{R_+}^{i+1}(M,N).$$ Since $R_0$ is of
dimension one, $N/xN$ is $\frak m_0$-torsion and so Lemma
\ref{2.2} implies that $H_{R_+}^i(M,N/xN)$ is Artinian for each
$i$. Now, using [BFT, Lemma 2.2], $R_0/\frak
m_0\otimes_{R_0}H_{R_+}^i(M,N)/xH_{R_+}^i(M,N)$ is Artinian. On
the other hand, application of the functor $R_0/\frak
m_0\otimes_{R_0}-$ to the above long exact sequence implies the
following isomorphism $R_0/\frak
m_0\otimes_{R_0}H_{R_+}^i(M,N)\cong R_0/\frak
m_0\otimes_{R_0}H_{R_+}^i(M,N)/xH_{R_+}^i(M,N)$; and hence the
assertion follows.
\end{proof}

\medskip

\begin{proposition}\label{2.8}
Let dim$(R_0)\leq 1$. Then we have the following conditions.\\
(i) The graded $R$-module $\Gamma_{\frak m_0R}(H_{R_+}^i(M,N))$ is
Artinian for each $i\in\mathbb{N}_0$.\\
 (ii) The graded $R$-module $H_{\frak m_0R}^1(H_{R_+}^i(M,N))$ is
Artinian for each $i\in \mathbb{N}_0$.
\end{proposition}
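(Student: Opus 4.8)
The plan is to prove both parts by the same two reductions used in Propositions \ref{2.6} and \ref{2.1}: first peel off the $\frak m_0R$-torsion of $N$, then split off a nonzerodivisor $x\in\frak m_0$, and finally apply Melkersson's Lemma to the $\frak m_0R$-torsion modules $\Gamma_{\frak m_0R}(H_{R_+}^i(M,N))$ and $H_{\frak m_0R}^1(H_{R_+}^i(M,N))$. Throughout write $B:=H_{R_+}^i(M,N)$. If $\dim(R_0)=0$ then $R_0$ is Artinian local, so $N$ is $\frak m_0$-torsion; by Lemma \ref{2.2} the module $B$ is Artinian, and being annihilated by a fixed power of $\frak m_0$ it is $\frak m_0R$-torsion, so $\Gamma_{\frak m_0R}(B)=B$ is Artinian and $H_{\frak m_0R}^1(B)=0$. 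Both claims are then immediate, and I assume $\dim(R_0)=1$ from now on.

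Next I reduce to the case $\Gamma_{\frak m_0R}(N)=0$. Applying $H_{R_+}^i(M,-)$ to $0\To\Gamma_{\frak m_0R}(N)\To N\To N/\Gamma_{\frak m_0R}(N)\To 0$ produces a map $\phi\colon B\To H_{R_+}^i(M,N/\Gamma_{\frak m_0R}(N))$ whose kernel and cokernel are subquotients of the modules $H_{R_+}^i(M,\Gamma_{\frak m_0R}(N))$ and $H_{R_+}^{i+1}(M,\Gamma_{\frak m_0R}(N))$, which are Artinian and $\frak m_0R$-torsion by Lemma \ref{2.2}. Since the higher local cohomology of an $\frak m_0R$-torsion module vanishes, the long exact sequences attached to $0\To\ker\phi\To B\To\operatorname{im}\phi\To 0$ and $0\To\operatorname{im}\phi\To H_{R_+}^i(M,N/\Gamma_{\frak m_0R}(N))\To\operatorname{coker}\phi\To 0$ show that $\Gamma_{\frak m_0R}(B)$ (resp. $H_{\frak m_0R}^1(B)$) is Artinian if and only if the corresponding module for $N/\Gamma_{\frak m_0R}(N)$ is. So I may assume $\Gamma_{\frak m_0R}(N)=0$ and pick $x\in\frak m_0$ that is a nonzerodivisor on $N$; as $\dim(R_0)=1$, the quotient $N/xN$ is $\frak m_0$-torsion, so $H_{R_+}^j(M,N/xN)$ is Artinian for every $j$ by Lemma \ref{2.2}.

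For part (i), the long exact sequence of $H_{R_+}^\bullet(M,-)$ applied to $0\To N\stackrel{x}\To N\To N/xN\To 0$ exhibits $(0:_Bx)$ as a homomorphic image of the Artinian module $H_{R_+}^{i-1}(M,N/xN)$, so $(0:_Bx)$ is Artinian. Since $\Gamma_{\frak m_0R}(B)$ is $\frak m_0R$-torsion and $x\in\frak m_0$, it is $x$-torsion, and $(0:_{\Gamma_{\frak m_0R}(B)}x)=\Gamma_{\frak m_0R}((0:_Bx))$ is a submodule of the Artinian module $(0:_Bx)$, hence Artinian. Melkersson's Lemma then gives that $\Gamma_{\frak m_0R}(B)$ is Artinian.

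For part (ii), the same long exact sequence shows in addition that $B/xB$ embeds in the Artinian module $H_{R_+}^i(M,N/xN)$, so both $(0:_Bx)$ and $B/xB$ are Artinian and $\frak m_0R$-torsion. Splitting multiplication by $x$ as $0\To(0:_Bx)\To B\To xB\To 0$ and $0\To xB\To B\To B/xB\To 0$ and applying $H_{\frak m_0R}^\bullet(-)$, the vanishing of the higher local cohomology of $(0:_Bx)$ and $B/xB$ identifies the endomorphism of $H_{\frak m_0R}^1(B)$ induced by $x$ as the composite of the isomorphism $H_{\frak m_0R}^1(B)\cong H_{\frak m_0R}^1(xB)$ with the surjection $H_{\frak m_0R}^1(xB)\twoheadrightarrow H_{\frak m_0R}^1(B)$; hence it is surjective with kernel a homomorphic image of $B/xB$, so $(0:_{H_{\frak m_0R}^1(B)}x)$ is Artinian. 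Since $H_{\frak m_0R}^1(B)$ is $\frak m_0R$-torsion, and therefore $x$-torsion, Melkersson's Lemma yields that $H_{\frak m_0R}^1(B)$ is Artinian. The main obstacle is precisely this last step: one must carry out the bookkeeping that factors $x\colon B\To B$ through $xB$ and verify that the kernel of the induced map on $H_{\frak m_0R}^1(B)$ is controlled by $B/xB$; once this is in place, Melkersson's Lemma closes both parts uniformly.
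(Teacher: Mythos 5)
Your argument is correct, and while part (i) tracks the paper's own proof almost verbatim, part (ii) takes a genuinely different route. For the dimension-zero case and for (i) you do exactly what the paper does: reduce to $\Gamma_{\frak m_0R}(N)=0$, pick $x\in\frak m_0$ regular on $N$, deduce that $(0:_{H_{R_+}^i(M,N)}x)$ is Artinian from Lemma \ref{2.2} applied to $N/xN$, and close with Melkersson's Lemma. For (ii) the paper instead argues by induction on $i$: it takes a free presentation $0\To K\To R^t\To M\To 0$, uses Lemma \ref{2.3} to squeeze $H_{R_+}^i(M,N)$ between $H_{R_+}^{i-1}(K,N)$ and $H_{R_+}^i(R^t,N)$, and disposes of the free case by citing [BFT, Theorem 2.5], with the base case $i=0$ handled via finite generation of $\Gamma_{R_+}(M,N)$. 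You stay with $B=H_{R_+}^i(M,N)$ and run Melkersson one homological degree higher: since $(0:_Bx)$ and $B/xB$ are Artinian and $\frak m_0R$-torsion, factoring multiplication by $x$ through $xB$ exhibits $x\colon H_{\frak m_0R}^1(B)\To H_{\frak m_0R}^1(B)$ as an isomorphism followed by a surjection whose kernel is controlled by $B/xB$, so $(0:_{H_{\frak m_0R}^1(B)}x)$ is Artinian and Melkersson applies again. This is sound; the only points worth making explicit are that for an $\frak m_0$-torsion module $T$ one has $H_{R_+}^j(M,T)\cong H_{\frak m}^j(M,T)$ and this module is $\frak m$-torsion (so the kernels and cokernels in your reduction steps really have vanishing higher $\frak m_0R$-cohomology), and that the kernel of $H_{\frak m_0R}^1(xB)\twoheadrightarrow H_{\frak m_0R}^1(B)$ is the image of $\Gamma_{\frak m_0R}(B/xB)$, hence a subquotient rather than a quotient of $B/xB$ --- neither affects Artinianness. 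What your route buys is self-containedness and uniformity in $i$: it avoids the induction, the free presentation of $M$, and the external appeal to [BFT, Theorem 2.5], at the modest cost of the bookkeeping around the factorization of multiplication by $x$.
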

\begin{proof}
If dim$(R_0)=0$, then $R_0$ is Artinian. In this case any finitely
generated $R$-module is Artinian, and then for each $i$, the
module $H_{R_+}^i(M,N)$ is Artinian. Thus $\Gamma_{\frak
m_0R}(H_{\frak m_0}^1(H_{R_+}^i(M,N)))$ is Artinian and $H_{\frak
m_0}^1(H_{R_+}^i(M,N))=0$ for each $i$ and so (i) and (ii) are
clear in this case. Now, assume that dim$(R_0)=1$. (i). By
applying the functor $H_{R_+}^i(M,-)$ to the exact sequence $0\To
\Gamma_{\frak m_0R}(N)\To N\To N/\Gamma_{\frak m_0R}(N)\To 0$,
and applying the functor $\Gamma_{\frak m_0R}(-)$ to the induced
exact functor, we can conclude that $\Gamma_{\frak
m_0R}(H_{R_+}^i(M,N))$ is Artinian if and only if $\Gamma_{\frak
m_0R}(H_{R_+}^i(M,N/\Gamma_{\frak m_0R}(N)))$ is Artinian and so
we may assume that $\Gamma_{\frak m_0R}(N)=0$. Now, let
$x\in\frak m_0$ be a non-zerodivisor of $N$. Then there is an
exact sequence $0\To N\stackrel{x.}\To N\To N/xN\To 0$ of graded
$R$-modules. Application of the functor $H_{R_+}^i(M,-)$ to this
sequence induces the following exact sequence
$$H_{R_+}^{i-1}(M,N/xN)\To H_{R_+}^i(M,N)\stackrel{x.}\To
H_{R_+}^i(M,N).$$ We note that $N/xN$ is $\frak m_0$-torsion and
so by Lemma \ref{2.2}, $H_{R_+}^{i-1}(M,N/xN)$ is Artinian and
then $(0:_{H_{R_+}^i(M,N)}x)$ is Artinian. This implies that
$\Gamma_{\frak m_0}((0:_{H_{R_+}^i(M,N)}x)=(0:_{\Gamma_{\frak
m_0R}(H_{R_+}^i(M,N))}x)$ is Artinian. Now, since $\Gamma_{\frak
m_0R}(H_{R_+}^i(M,N))$ is $x$-torsion, by using Melkesson's Lemma,
it is Artinian. (ii). We proceed the assertion by induction on
$i$. If $i=0$, then $H_{\frak m_0R}^1(\Gamma_{R_+}(M,N))=H_{\frak
m}^1(\Gamma_{R_+}(M,N)).$ We note that the last term is Artinian
because $\Gamma_{R_+}(M,N)$ is finitely generated. Suppose,
inductively that the result has been proved for all values smaller
than $i$ and so we prove it for $i$. Let $y\in\frak m_0$ be a
system of parameter of $\frak m_0$. As $M$ is finitely generated,
for some positive integer $t$ there exists a short exact sequence
$0\To K\To R^t\To M\To 0$ of $R$-modules. In view of Lemma
\ref{2.3}, if we apply the functor $H_{R_+}^i(-,N)$ to the above
exact sequence, we get the following exact sequence
$$H_{R_+}^{i-1}(K,N)\stackrel{\alpha}\To H_{R_+}^i(M,N)\To
H_{R_+}^i(R^t,N)\stackrel{\beta}\To H_{R_+}^i(K,N).$$ Consider
$A:={\rm Im}(\alpha)$, $B:={\rm Ker}(\beta)$ and $C:={\rm
Im}(\beta)$. Application of the functor $H_{yR}^i$ to the above
exact sequence gives the epimorphism
$H_{yR}^1(H_{R_+}^{i-1}(K,N))\twoheadrightarrow H_{yR}^1(A)$, the
monomorphism $\Gamma_{yR}(C)\rightarrowtail
\Gamma_{yR}(H_{R_+}^i(K,N))$, and the exact sequence
$\Gamma_{yR}(C)\To H_{yR}^1(B)\To
H_{yR}^1(H_{R_+}^i(R^t,N))\hspace{0.3cm} (\ddag).$ By using
induction hypotheses $H_{yR}^1(H_{R_+}^{i-1}(K,N))=H_{\frak m_0R
}^1(H_{R_+}^{i-1}(K,N))$ is Artinian and so is $H_{yR}^1(A)$. On
the other hand since, by (i), the module
$\Gamma_{yR}(H_{R_+}^i(K,N))$ is Artinain, the module
$\Gamma_{yR}(C)$ is Artinian. Using [BFT, Theorem 2.5], the
module $H_{yR}^1(H_{R_+}^i(R^t,N))$ is Artinian and then the
exact sequence $\ddag$ and the previous arguments imply that
$H_{yR}^1(B)$ is Artinian. Now, since both $H_{yR}^1(A)$ and
$H_{yR}^1(B)$ are Artinian, one can easily deduce that
$H_{yR}^1(H_{R_+}^i(M,N))=H_{\frak m_0R}^1(H_{R_+}^i(M,N))$ is
Artinian.
\end{proof}

\medskip

\begin{proposition}\label{2.9}
Let dim$(R_0)\leq 1$. Then $H_{R_+}^p(M,H_{\frak m_0R}^1(N))$ is
Artinian for each $p\in \mathbb{N}_0$.
\end{proposition}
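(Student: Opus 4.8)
The plan is to strip $N$ down to a module on which a single element of $\frak m_0$ acts as a non-zerodivisor, to read off an explicit presentation of $H_{\frak m_0R}^1(N)$, and then to combine Lemma \ref{2.2} with Melkersson's Lemma. First, if $\di(R_0)=0$ then $\frak m_0$ is nilpotent, hence so is $\frak m_0R$, and therefore $H_{\frak m_0R}^1(N)=0$; the claim is then trivial. So I assume $\di(R_0)=1$. Applying $H_{\frak m_0R}^i(-)$ to $0\To\Gamma_{\frak m_0R}(N)\To N\To N/\Gamma_{\frak m_0R}(N)\To 0$ and noting that the finitely generated $\frak m_0R$-torsion module $\Gamma_{\frak m_0R}(N)$ has vanishing cohomology in positive degrees, I obtain $H_{\frak m_0R}^1(N)\cong H_{\frak m_0R}^1(N/\Gamma_{\frak m_0R}(N))$. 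Thus I may replace $N$ by $N/\Gamma_{\frak m_0R}(N)$ and assume $\Gamma_{\frak m_0R}(N)=0$, so that there is an element $x\in\frak m_0$ which is a non-zerodivisor on $N$; since $\di(R_0)=1$, the module $N/xN$ is finitely generated and $\frak m_0$-torsion.

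The structural step is to apply $H_{\frak m_0R}^i(-)$ to $0\To N\stackrel{x.}\To N\To N/xN\To 0$. Because $\Gamma_{\frak m_0R}(N)=0$ and $H_{\frak m_0R}^1(N/xN)=0$ (as $N/xN$ is finitely generated and $\frak m_0$-torsion), the long exact sequence collapses to the short exact sequence $0\To N/xN\To H_{\frak m_0R}^1(N)\stackrel{x.}\To H_{\frak m_0R}^1(N)\To 0$. Writing $H:=H_{\frak m_0R}^1(N)$ and applying $H_{R_+}^p(M,-)$, I get for each $p$ an exact sequence $H_{R_+}^p(M,N/xN)\To H_{R_+}^p(M,H)\stackrel{x.}\To H_{R_+}^p(M,H)$, so that $(0:_{H_{R_+}^p(M,H)}x)$ is a homomorphic image of $H_{R_+}^p(M,N/xN)$. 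As $N/xN$ is finitely generated and $\frak m_0$-torsion, Lemma \ref{2.2} shows $H_{R_+}^p(M,N/xN)$ is Artinian, and hence $(0:_{H_{R_+}^p(M,H)}x)$ is Artinian.

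Finally, I must show that $H_{R_+}^p(M,H)$ is $x$-torsion and then conclude by Melkersson's Lemma. This is the main obstacle, since $H=H_{\frak m_0R}^1(N)$ is not finitely generated and so Lemma \ref{2.2} does not apply on the nose. The point is that $H$ is nonetheless $\frak m_0R$-torsion, being a local cohomology module for $\frak m_0R$; consequently its minimal injective resolution consists of $\frak m_0R$-torsion injectives, and running the isomorphism chain in the proof of Lemma \ref{2.2} for such a resolution yields $H_{R_+}^p(M,H)\cong H_{\frak m}^p(M,H)$. The right-hand side is computed by the complex $\hm{R}{M}{\Gamma_{\frak m}(\mathcal{I})}$, whose terms are $\frak m$-torsion because $\hm{R}{M}{-}$ carries $\frak m$-torsion modules to $\frak m$-torsion modules; hence $H_{R_+}^p(M,H)$ is $\frak m$-torsion, and in particular $x$-torsion. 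Since its submodule $(0:_{H_{R_+}^p(M,H)}x)$ is Artinian, Melkersson's Lemma gives that $H_{R_+}^p(M,H)=H_{R_+}^p(M,H_{\frak m_0R}^1(N))$ is Artinian, completing the argument.
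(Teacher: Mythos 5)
Your proof is correct, but it takes a genuinely different route from the paper. The paper invokes the Grothendieck spectral sequence $E_2^{p,q}=H_{R_+}^p(M,H_{\frak m_0R}^q(N))\Rightarrow H_{\frak m}^{p+q}(M,N)$, notes that only the rows $q=0,1$ survive when ${\rm dim}(R_0)=1$, and extracts $E_2^{p,1}$ from the resulting five-term-type exact sequence $E_2^{p+1,0}\To H_{\frak m}^{p+1}(M,N)\To E_2^{p,1}\To E_2^{p+2,0}\To H_{\frak m}^{p+2}(M,N)$, using that $H_{\frak m}^{i}(M,N)$ and $H_{\frak m}^{i}(M,\Gamma_{\frak m_0R}(N))$ are Artinian. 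You instead reduce to $\Gamma_{\frak m_0R}(N)=0$, pick a non-zerodivisor $x\in\frak m_0$, and exploit the presentation $0\To N/xN\To H_{\frak m_0R}^1(N)\stackrel{x.}\To H_{\frak m_0R}^1(N)\To 0$, so that $(0:_{H_{R_+}^p(M,H)}x)$ is a quotient of the Artinian module $H_{R_+}^p(M,N/xN)$ (Lemma \ref{2.2}), and you close with Melkersson's criterion after checking that $H_{R_+}^p(M,H)\cong H_{\frak m}^p(M,H)$ is $\frak m$-torsion. All the individual steps check out: the vanishing of higher $\frak m_0R$-cohomology of $\frak m_0R$-torsion modules, the delta-functor property of $H_{R_+}^\bullet(M,-)$ on non-finitely-generated second arguments, and the observation that the isomorphism chain of Lemma \ref{2.2} only needs an $\frak m_0R$-torsion injective resolution (not finite generation). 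Your argument is the more elementary and self-contained one, staying within the toolkit of Propositions \ref{2.6} and \ref{2.8} and avoiding both the spectral sequence machinery and the unproved-in-the-paper assertion that $H_{\frak m}^{i}(M,N)$ is Artinian; the paper's argument is shorter once those inputs are granted and generalizes more readily to statements about $H_{R_+}^p(M,H_{\frak m_0R}^q(N))$ for higher $q$ when ${\rm dim}(R_0)$ is larger.
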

\begin{proof}
If dim$(R_0)=0$, then $H_{\frak m_0R}^1(N)=0$ and so the result is
clear in this case. Now, assume that dim$(R_0)=1$. By the
Grothendieck spectral sequence (see [R, Theorem 11.38]), for each
$p,q\in\mathbb{N}_0$, there is
$$E_2^{p, q}:=H_{R_+}^p(M,H_{\frak m_0R}^q(N))\underset{p}\Longrightarrow H_{\frak
m}^{p+q}(M,N).$$ As dim$(R_0)=1$, we have $H_{\frak m_0R}^q(N)=0$
for all $q>1$ and then $E_2^{p, q}=0$ for all $q\neq 0,1$. Thus we
can apply the dual of [W, Ex. 5.2.2] to get the following exact
sequence $$E_2^{p+1, 0}\To H_{\frak m}^{p+1}(M,N)\To E_2^{p, 1}\To
E_2^{p+2,0}\To H_{\frak m}^{p+2}(M,N).$$ It is easy to see that
$H_{\frak m}^{p+1}(M,N)$ and
$E_2^{p+2,0}=H_{R_+}^{p+2}(M,\Gamma_{\frak m_0R}(N))=H_{\frak m
}^{p+2}(M,\Gamma_{\frak m_0R}(N))$ are Artinian. Thus the above
exact sequence implies that $E_2^{p,1}=H_{R_+}^p(M,H_{\frak
m_0R}^1(N))$ is Artinian.
\end{proof}

\medskip

\begin{proposition}\label{2.10}
Let dim$(R_0)\leq 1$. Then $H_{\frak m_0R}^j(M,H_{R_+}^i(N))$ is
Artinian for each $j,i\in \mathbb{N}_0$.
\end{proposition}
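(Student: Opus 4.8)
The plan is to fix $i$, abbreviate $Y:=H_{R_+}^i(N)$, and analyze the single module $H_{\mathfrak{m}_0R}^j(M,Y)$ through the Grothendieck spectral sequence of the composite functor $\mathrm{Hom}_R(M,-)\circ\Gamma_{\mathfrak{m}_0R}$, whose total derived functor is generalized local cohomology with respect to $\mathfrak{m}_0R$. Since $\Gamma_{\mathfrak{m}_0R}$ carries injective modules to injective modules and every injective module is $\mathrm{Hom}_R(M,-)$-acyclic, [R, Theorem 11.38] supplies, for each $p,q\in\mathbb{N}_0$, a spectral sequence
$$E_2^{p,q}:=\mathrm{Ext}_R^p\bigl(M,H_{\mathfrak{m}_0R}^q(Y)\bigr)\underset{p}{\Longrightarrow}H_{\mathfrak{m}_0R}^{p+q}(M,Y).$$
The whole argument will consist in showing that all $E_2$-terms are Artinian and then reading off the abutment.

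First I would dispose of the case $\dim(R_0)=0$: here $\mathfrak{m}_0R$ is nilpotent, so $\Gamma_{\mathfrak{m}_0R}=\mathrm{id}$, the higher rows vanish, and the sequence degenerates to $H_{\mathfrak{m}_0R}^j(M,Y)\cong\mathrm{Ext}_R^j(M,Y)$; moreover $Y=\Gamma_{\mathfrak{m}_0R}(Y)$ is Artinian by Proposition \ref{2.8}(i), and $\mathrm{Ext}_R^j(M,A)$ is Artinian whenever $A$ is Artinian because $M$ is finitely generated. For the main case $\dim(R_0)=1$, the key reduction is that $\mathfrak{m}_0$ is the radical of a one-element system of parameters, so $H_{\mathfrak{m}_0R}^q(Y)=0$ for all $q\geq 2$ and the spectral sequence has only the two rows $q=0,1$. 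The two surviving coefficient modules $\Gamma_{\mathfrak{m}_0R}(Y)=\Gamma_{\mathfrak{m}_0R}(H_{R_+}^i(N))$ and $H_{\mathfrak{m}_0R}^1(Y)=H_{\mathfrak{m}_0R}^1(H_{R_+}^i(N))$ are Artinian by Proposition \ref{2.8} applied with $M=R$. Since $M$ is finitely generated, a resolution by finitely generated free modules exhibits each $E_2^{p,q}=\mathrm{Ext}_R^p(M,H_{\mathfrak{m}_0R}^q(Y))$ as a subquotient of a finite direct sum of copies of an Artinian module, so every $E_2$-term is Artinian.

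Finally I would extract the abutment. Because only the rows $q=0,1$ survive, the induced filtration on $H_{\mathfrak{m}_0R}^n(M,Y)$ collapses to a short exact sequence
$$0\To E_\infty^{n,0}\To H_{\mathfrak{m}_0R}^n(M,Y)\To E_\infty^{n-1,1}\To 0,$$
in which $E_\infty^{n,0}$ and $E_\infty^{n-1,1}$ are subquotients of $E_2^{n,0}$ and $E_2^{n-1,1}$ respectively, hence Artinian; an extension of Artinian modules being Artinian, we conclude that $H_{\mathfrak{m}_0R}^n(M,H_{R_+}^i(N))$ is Artinian for every $n=j$, and since $i$ was arbitrary the proposition follows. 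I expect the main obstacle to be bookkeeping rather than any deep new input: one must verify the acyclicity hypotheses that legitimize the Grothendieck spectral sequence, confirm the vanishing $H_{\mathfrak{m}_0R}^{q}(Y)=0$ for $q\geq 2$ in dimension one, and correctly read the two-step filtration off the two surviving rows; the genuine finiteness facts needed are exactly the $M=R$ cases already secured in Proposition \ref{2.8}.
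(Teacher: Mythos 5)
Your proof is correct, but it takes a different route from the paper's. The paper argues by induction on $j$: it writes a free presentation $0\To K\To R^t\To M\To 0$, applies the long exact sequence of Lemma \ref{2.3} in the first variable to get $H_{\frak m_0R}^{j-1}(K,H_{R_+}^i(N))\To H_{\frak m_0R}^j(M,H_{R_+}^i(N))\To H_{\frak m_0R}^j(R^t,H_{R_+}^i(N))$, and then uses the induction hypothesis on the left term and [BFT, Theorem 2.5] (plus the vanishing $H_{\frak m_0R}^j=0$ for $j>1$) on the right term. You instead run the Grothendieck spectral sequence $E_2^{p,q}={\rm Ext}_R^p(M,H_{\frak m_0R}^q(Y))\Rightarrow H_{\frak m_0R}^{p+q}(M,Y)$ for $Y=H_{R_+}^i(N)$, observe that only the rows $q=0,1$ survive because $\frak m_0$ is the radical of a principal ideal when ${\rm dim}(R_0)=1$, and conclude from the resulting two-step filtration; the essential finiteness input is the same in both arguments, namely the Artinianness of $\Gamma_{\frak m_0R}(H_{R_+}^i(N))$ and $H_{\frak m_0R}^1(H_{R_+}^i(N))$ (your Proposition \ref{2.8} with $M=R$, the paper's [BFT, Theorem 2.5]), together with the elementary fact that ${\rm Ext}_R^p(M,A)$ is Artinian for $A$ Artinian and $M$ finitely generated. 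Your approach avoids the induction on $j$ and is closer in spirit to the paper's own proof of Proposition \ref{2.9}, and it yields the extra structural information that $H_{\frak m_0R}^j(M,Y)$ sits in a short exact sequence between subquotients of ${\rm Ext}_R^j(M,\Gamma_{\frak m_0R}(Y))$ and ${\rm Ext}_R^{j-1}(M,H_{\frak m_0R}^1(Y))$; the paper's induction is more elementary in that it needs no spectral sequence machinery beyond Lemma \ref{2.3}. All the steps you flag as requiring verification (injectives go to injectives under $\Gamma_{\frak m_0R}$, the vanishing of $H_{\frak m_0R}^q$ for $q\geq 2$, the two-row degeneration) do go through.
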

\begin{proof}
If dim$(R_0)=0$, then each finitely generated $R$-module is $\frak
m_0$-torsion. Thus $H_{R_+}^i(N)$ is Artinian and so is $\frak
m_0$-torsion. Then for each $j$, there is an isomorphism $H_{\frak
m_0R}^j(M,H_{R_+}^i(N))\cong {\rm Ext}_R^j(M,H_{R_+}^i(N))$. One
can easily show that the last module is Artinian. Now, assume that
dim$(R_0)=1$. We proceed by induction on $j$. If $j=0$, then we
have $H_{\frak m_0R}^0(M,H_{R_+}^i(N))={\rm Hom}_R(M,\Gamma_{\frak
m_0R}(H_{R_+}^i(N)))$. By using [BFT, Theorem 2.5], the module
$\Gamma_{\frak m_0R}(H_{R_+}^i(N))$ is Artinian and so one can
easily show that ${\rm Hom}_R(M,\Gamma_{\frak
m_0R}(H_{R_+}^i(N)))$ is Artinian. Now, we assume that $j>0$ and
the result has been proved for all values smaller than $j$ and we
prove it for $j$. Since $M$ is finitely generated, for some
positive integer $t$, there is an exact sequence $0\To K\To
R^t\To M\To 0$ of $R$-module. In view of Lemma \ref{2.3}, if we
apply the functor $H_{\frak m_0R}^j(-,H_{R_+}^i(N))$ to the above
exact sequence, we have the following exact sequence
$$H_{\frak m_0R}^{j-1}(K,H_{R_+}^i(N))\To H_{\frak
m_0R}^j(M,H_{R_+}^i(N))\To H_{\frak m_0R}^j(R^t,H_{R_+}^i(N)).$$
We note that by induction hypotheses, the module $H_{\frak
m_0R}^{j-1}(K,H_{R_+}^i(N))$ is Artinian and $H_{\frak
m_0R}^j(R^t,H_{R_+}^i(N))=0$ for all $j>1$ and also $H_{\frak
m_0R}^1(R^t,H_{R_+}^i(N))$ is Artinian by [BFT, Theorem 2.5]. Now,
in view of the above exact sequence, we get our assertion.
\end{proof}

%%%%%%%%%%%%%%%%%%%%%%%%%%%%%%%%%%%%%%%%%%%%%%%%%%%%%%%%%%%%%%%%%%%

\bibliographystyle{plain}

%\vskip 1cm \footnotesize {Ismael Akray) {\sc Department of Mathematics, Soran University, Erbil, Iraq }

\medskip
%{\it E-mail address}: {\tt ismaelakray@soran.edu.iq}

%\vskip 1cm \footnotesize {ADIL KADIR JABBAR) {\sc Department of Mathematics, Sulaimani University, Sulaimani, Iraq }

\medskip

%{\it E-mail address}: {\tt  adilqj@gmail.com}

%\vskip 1cm \footnotesize {REZA SAZEEDEH) {\sc Department of Mathematics, Urmia University, Urmia, Iran -and-\\ Research %Institute for Fundamental Sciences, Tabriz, Iran}

\medskip

%{\it E-mail address}: {\tt  rsazeedeh@ipm.ir}

\end{document}